\DeclareMathOperator{\supp}{supp}
\let\Re\relax
\DeclareMathOperator{\Re}{Re}
\DeclareMathOperator{\polylog}{polylog}
\begin{document}

\title*{Sparse approximation of multivariate functions from small datasets via weighted orthogonal matching pursuit}
\titlerunning{Sparse approximation of multivariate functions from small datasets via weighted OMP}
\author{Ben Adcock and Simone Brugiapaglia}
\institute{Ben Adcock \at Simon Fraser University, 
Burnaby, BC, Canada, \email{ben\_adcock@sfu.ca}
\and Simone Brugiapaglia \at Simon Fraser University, 
Burnaby, BC, Canada, \email{simone\_brugiapaglia@sfu.ca}}
%
%
\maketitle

\abstract*{We show the potential of greedy recovery strategies for the sparse approximation of multivariate functions from a small dataset of pointwise evaluations by considering an extension of the orthogonal matching pursuit to the setting of weighted sparsity. The proposed recovery strategy is based on a formal derivation of the greedy index selection rule. Numerical experiments show that the proposed weighted orthogonal matching pursuit algorithm is able to reach accuracy levels similar to those of weighted $\ell^1$ minimization programs while considerably improving the computational efficiency for small values of the sparsity level.}

\abstract{We show the potential of greedy recovery strategies for the sparse approximation of multivariate functions from a small dataset of pointwise evaluations by considering an extension of the orthogonal matching pursuit to the setting of weighted sparsity. The proposed recovery strategy is based on a formal derivation of the greedy index selection rule. Numerical experiments show that the proposed weighted orthogonal matching pursuit algorithm is able to reach accuracy levels similar to those of weighted $\ell^1$ minimization programs while considerably improving the computational efficiency for small values of the sparsity level.}

\section{Introduction}

In recent years, a new class of approximation strategies based on compressive sensing (CS) has been shown to be able to substantially lessen the curse of dimensionality in the context of approximation of multivariate functions from pointwise data, with applications to the uncertainty quantification of partial differential equations with random inputs. Based on random sampling from orthogonal polynomial systems and on weighted $\ell^1$ minimization, these techniques are able to accurately recover a sparse approximation to a function of interest from a small-sized datasets of pointwise samples. In this paper, we show the potential of weighted greedy techniques as an alternative to convex minimization programs based on weighted $\ell^1$ minimization in this context. 

The contribution of this paper is twofold. First, we propose a weighted orthogonal matching pursuit (WOMP) algorithm  based on a rigorous derivation of the corresponding greedy index selection strategy. Second, we numerically show that WOMP is a promising alternative to convex recovery programs based on weighted $\ell^1$ minimization, thanks to its ability to compute sparse approximations with an accuracy comparable to those computed via weighted $\ell^1$ minimization, but with a considerably lower computational cost when the target sparsity level (and, hence, the number of WOMP iterations) is small enough. It is also worth observing here that WOMP computes approximations that are exactly sparse, as opposed to approaches based on weighted $\ell^1$ minimization, which provide compressible approximations in general.\\

\noindent\textbf{Brief literature review.} Various approaches for multivariate function approximation based on CS with applications to uncertainty quantification can be found in \cite{adcock2018infinite,adcock2017compressed,bouchot2017multi,chkifa2018polynomial,doostan2011non,mathelin2012compressed,peng2014weighted,rauhut2017compressive,yang2013reweighted}. An overview of greedy methods for sparse recovery in CS and, in particular of OMP, can be found in \cite[Chapter 3.2]{foucart2013mathematical}. For a general review on greedy algorithms, we refer the reader to \cite{temlyakov2008greedy} and references therein. Some numerical experiments on a weighted variant of OMP  have been performed in the context of CS methods for uncertainty quantification in \cite{bouchot2017multi}. Weighted variants of OMP have also been considered in \cite{li2013weighted,xiao2013weighted}, but the weighted procedure is tailored for specific signal processing applications and the term ``weighted'' does not refer to the weighted sparsity setting of \cite{rauhut2016interpolation} employed here. To the authors' knowledge, the weighted variant of OMP considered in this paper seems to have been proposed here for the first time.\\

\noindent\textbf{Organization of the paper.} In \S\ref{sec:fun_approx} we describe the setting of sparse multivariate function approximation in orthonormal systems via random sampling and weighted $\ell^1$ minimization. Then, in \S\ref{sec:WOMP} we formally derive a strategy for the greedy selection in the weighted sparsity setting and present the WOMP algorithm. Finally, we numerically show the effectiveness of the proposed technique in \S\ref{sec:numerics} and give our conclusions in \S\ref{sec:conclusions}.

\section{Sparse multivariate function approximation}
\label{sec:fun_approx}

We start by briefly introducing the framework of sparse multivariate function approximation from pointwise samples and refer the reader to \cite{adcock2017compressed} for further details. 

Our aim is to approximate a function defined over a high-dimensional domain
$$
f : D \to \mathbb{C}, \quad \text{with } D=(-1,1)^d,
$$ 
where $d \gg 1$, from a dataset of pointwise samples $f(t_1),\ldots,f(t_m)$. Let $\nu$ be a probability measure on $D$ and let $\{\phi_j\}_{j \in \mathbb{N}_0^d}$ be an orthonormal basis for the Hilbert space $L^2_\nu(D)$. In this paper, we will consider $\{\phi_j\}_{j \in \mathbb{N}_0^d}$ to be a tensorized family of Legendre or Chebyshev orthogonal polynomials, with $\nu$ being the uniform or the Chebyshev measure on $D$, respectively. Assuming that $f \in L^2_\nu(D) \cap L^\infty(D)$, we consider the series expansion
$$
f = \sum_{j \in \mathbb{N}_0^d} x_j \phi_j.
$$
Then, we choose a finite set of multi-indices $\Lambda \subseteq \mathbb{N}_0^d$ with $|\Lambda| = N$ and obtain the truncated series expansion
$$
f_\Lambda = \sum_{j \in \Lambda} x_j \phi_j.
$$
In practice, a convenient choice for $\Lambda$ is the hyperbolic cross of order $s$, i.e.
$$
\Lambda := \left\{j \in \mathbb{N}^d_0 : \prod_{k = 1}^d (j_k+1) \leq s \right\},
$$ 
due to the moderate growth of $N$ with respect to $d$. Now, assuming we collect $m\ll N$ pointwise samples independently distributed according to $\nu$, namely,
$$
f(t_1), \ldots, f(t_m),\quad 
\text{with} \quad 
t_1,\ldots,t_m \stackrel{\text{i.i.d.}}{\sim} \nu,
$$
the approximation problem can be recasted as a linear system
\begin{equation}
\label{eq:lin_sys}
A x_\Lambda = y + e,
\end{equation}
with $x_\Lambda = (x_j)_{j \in \Lambda} \in \mathbb{C}^N$, and where the sensing matrix $A \in \mathbb{C}^{m \times N}$ and the measurement vector $y \in\mathbb{C}^m$ are defined as
\begin{equation}
\label{eq:def_A_y}
A_{ij} := \frac{1}{\sqrt{m}}\phi_j(t_i), 
\quad y_i := \frac{1}{\sqrt{m}}f(t_i), 
\quad \forall i \in[m], \; \forall j \in [N],
\end{equation}
with $[k]:=\{1,\ldots,k\}$ for every $k \in \mathbb{N}$. The vector $e \in \mathbb{C}^m$ accounts for the truncation error introduced by $\Lambda$ and satisfies $\|e\|_2 \leq \eta$, where $\eta >0$ is an \emph{a priori} upper bound to the truncation $L^\infty(D)$-error, namely $\|f-f_{\Lambda}\|_{L^\infty(D)} \leq \eta$. A sparse approximation to the vector  can be then computed by means of weighted $\ell^1$ minimization. 

Given weights $w \in \mathbb{R}^N$ with $w > 0$ (where the inequality is read componentwise), recall that the weighted $\ell^1$ norm of a vector $z \in \mathbb{C}^N$ is defined as $\|z\|_{1,w}:=\sum_{j \in [N]}|z_j|w_j$. We can compute an approximation $\hat{x}_\Lambda$ to $x_\Lambda$ by solving the weighted quadratically-constrained basis pursuit (WQCBP) program
\begin{equation}
\label{eq:WQCBP}
\hat{x}_\Lambda \in \arg\min_{z \in \mathbb{C}^N} \|z\|_{1,w}, 
\quad \text{s.t.} \quad \|A z-y\|_{2} \leq \eta,
\end{equation}
where the weights $w \in \mathbb{R}^N$ are defined as
\begin{equation}
\label{eq:weights}
w_j = \|\phi_j\|_{L^\infty(D)}.
\end{equation}
The effectiveness of this particular choice of $w$ is supported by theoretical results and it has been validated from the numerical viewpoint (see \cite{adcock2018infinite,adcock2017compressed}). The resulting approximation $\hat{f}_\Lambda$ to $f$ is finally defined as
$$
\hat{f}_\Lambda := \sum_{j \in \Lambda} (\hat{x}_\Lambda)_j \phi_j.
$$
In this setting, stable and robust recovery guarantees in high probability can be shown for the approximation errors $\|f-f_\Lambda\|_{L^2_\nu(D)}$ and $\|f-f_\Lambda\|_{L^\infty_\nu(D)}$ under a sufficient condition on the number of samples of the form
$
 m \gtrsim s^\gamma \cdot \polylog(s,d),
$ 
with $\gamma = 2$ or $\gamma = \log(3)/\log(2)$ for tensorized Legendre or Chebyshev polynomials, respectively, hence lessening the curse of dimensionality to a substantial extent (see \cite{adcock2017compressed} and references therein). We also note in passing that decoders such as the weighted LASSO or the weighted square-root LASSO can be considered as alternatives to \eqref{eq:WQCBP} for weighted $\ell^1$ minimization (see \cite{adcock2017correcting}).

\section{Weighted orthogonal matching pursuit}
\label{sec:WOMP}

In this paper, we consider greedy sparse recovery strategies to find sparse approximate solutions to \eqref{eq:lin_sys}, as alternatives to the WQCBP optimization program \eqref{eq:WQCBP}. With this aim, we propose a variation of the OMP algorithm to the weighted setting. 

Before introducing weighted OMP (WOMP) in Algorithm~\ref{alg:WOMP}, let us  recall the rationale behind the greedy index selection rule of OMP (corresponding to Algorithm~\ref{alg:WOMP} with $\lambda = 0$ and $w = 1$). For a detailed introduction to OMP, we refer the reader to \cite[Section 3.2]{foucart2013mathematical}. Given a support set $S\subseteq[N]$, OMP solves the least-squares problem
$$
\min_{z \in \mathbb{C}^N} G_0(z) \text{ s.t. } \supp(z) \subseteq S,
$$
where $G_0(z) :=  \|y-Az\|_2^2$. In OMP, the support $S$ is iteratively enlarged by one index at the time. Namely, we consider the update $S \cup \{j\}$, where the index $j \in [N]$ is selected in a greedy fashion. In particular, assuming that $A$ has $\ell^2$-normalized columns, it is possible to show that (see  \cite[Lemma 3.3]{foucart2013mathematical}) 
\begin{equation}
\label{eq:minG0_t}
\min_{t \in \mathbb{C}} G_0(x + t e_j) = G_0(x) - |(A^*(y-Ax))_j|^2.
\end{equation}
This leads to the greedy index selection rule operated by OMP, which prescribes the selection of an index $j \in [N]$ that maximizes the quantity $|(A^*(y-Ax))_j|^2$. We will use this simple intuition to extend OMP to the weighted case by replacing the function $G_0$ with a suitable function $G_\lambda$ that takes into account the data-fidelity term and the weighted sparsity prior at the same time. 

Let us recall that, given a set of weights $w \in \mathbb{R}^N$ with $w >0$, the weighted $\ell^0$ norm of a vector  $z \in \mathbb{C}^N$ is defined as  the quantity (see \cite{rauhut2016interpolation})\footnote{The term ``norm'' here is an abuse of language, but we will stick to it due to its popularity.}
$$
\|z\|_{0,w} := \sum_{j \in \supp(z)} w_j^2.
$$
Notice that when $w = 1$, then $\|\cdot\|_{0,w} = \|\cdot\|_0$ is the standard $\ell^0$ norm. Given $\lambda  \geq 0$, we define the function
\begin{equation}
\label{eq:defGlambda}
G_\lambda(z) := \|y-Az\|_2^2 + \lambda \|z\|_{0,w}.
\end{equation}
The tradeoff between the data-fidelity constraint and the weighted sparsity prior is balanced via the choice of the regularization parameter $\lambda$. Applying the same rationale employed in OMP for the greedy index selection and replacing $G_0$ with $G_\lambda$ leads to Algorithm~\ref{alg:WOMP}, which corresponds to OMP when $\lambda = 0$ and $w = 1$. 
\begin{algorithm}
\caption{\label{alg:WOMP}Weighted orthogonal matching pursuit (WOMP)}
\textbf{Inputs:}
\begin{itemize}
\item $A\in \mathbb{C}^{m \times N}$: sampling matrix, with $\ell^2$-normalized columns;
\item $y \in \mathbb{C}^m$: vector of samples;
\item $w \in \mathbb{R}^N$: weights;
\item $\lambda \geq 0$: regularization parameter;
\item $K \in \mathbb{N}$: number of iterations.
\end{itemize}
\textbf{Procedure:}
\begin{enumerate}
\item Let $\hat{x}_0 = 0$ and $S_0 =  \emptyset$;
\item For $k = 1,\ldots,K$:
\begin{enumerate}
\item Find  $\displaystyle j_k  \in \arg\max_{j \in [N]} \Delta_\lambda(x_{k-1},S_{k-1},j)$, with $\Delta_\lambda$ as in \eqref{eq:defDelta};
\item Define $S_k   = S_{k-1} \cup \{j_k\}$;
\item Compute $\displaystyle\hat{x}_{k} \in \arg\min_{v \in \mathbb{C}^{N}} \|A v - y\|_2 \text{ s.t. } \supp(v) \subseteq S_k$.
\end{enumerate}
\end{enumerate}
\textbf{Output:}
\begin{itemize}
\item $\hat{x}_K \in \mathbb{C}^N$: approximate solution to $Az = y$.
\end{itemize}

\end{algorithm}
\begin{remark}
\label{rem:normaliz}
The $\ell^2$-normalization of the columns of $A$ is a necessary condition to apply Algorithm~\ref{alg:WOMP}. If $A$ does not satisfy this hypothesis, is suffices to apply WOMP to the normalized system $\widetilde{A}z = y$, where $\widetilde{A} = AM^{-1}$ and $M$ is the matrix containing the $\ell^2$ norms of the columns of $A$ on the main diagonal and zeroes elsewhere. The approximate solution $\hat{x}_K$ to $\widetilde{A}z = y$ computed via WOMP is then rescaled as $M \hat{x}_K$, which approximately solves $Az = y$.
\end{remark}

The following proposition justifies the weighted variant of OMP considered in Algorithm~\ref{alg:WOMP}. In order to minimize $G_\lambda$ as much as possible, at each iteration, WOMP selects the index $j$ that maximizes the quantity $\Delta_\lambda(x,S,j)$ defined in \eqref{eq:defDelta}. The following proposition makes the role of the quantity $\Delta_\lambda(x,S,j)$ transparent, generalizing relation \eqref{eq:minG0_t} to the weighted case, under suitable conditions on $A$ and $x$ that are verified at each iteration of Algorithm~\ref{alg:WOMP}.  
\begin{proposition}
\label{prop:minG}
Let $\lambda \geq  0$, $S \subseteq [N]$, $A \in \mathbb{C}^{m \times N}$ with $\ell^2$-normalized columns, and $x \in \mathbb{C}^N$ satisfying 
$$
x \in \arg \min_{z \in \mathbb{C}^N} \|y-Az\|_2 \; \text{ s.t. } \supp(z) \subseteq S.
$$
Then, for every $j \in [N]$, the following holds:
$$
\min_{t \in \mathbb{C}} G_\lambda(x+te_j) = G_\lambda(x) - \Delta_\lambda(x,S,j), 
$$
where $G_\lambda$ is defined as in \eqref{eq:defGlambda},  $\Delta_\lambda : \mathbb{C}^N \times 2^{[N]} \times [N] \to \mathbb{R}$ is defined by
\begin{equation}
\label{eq:defDelta}
\Delta_\lambda(x,S,j) 
:= \begin{cases} 
\max\left\{ |(A^*(y-Ax))_j|^2 - \lambda w_j^2, \; 0\right\} & \text{if } j \notin S \\
\max\left\{\lambda w_j^2 - |x_j|^2, \; 0\right\} & \text{if } j \in S \text{ and } x_j \neq 0\\
0 & \text{if } j \in S \text{ and } x_j = 0.
\end{cases}
\end{equation}
\end{proposition}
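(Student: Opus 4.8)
The plan is to reduce the statement to a scalar minimization in $t$, exploiting that $x + te_j$ differs from $x$ only in coordinate $j$. First I would expand the quadratic data-fidelity part. Writing $r := y - Ax$ for the residual and $c_j := (A^*(y-Ax))_j$, and using that the columns of $A$ are $\ell^2$-normalized (so $\|Ae_j\|_2 = 1$), completing the square gives
\[
\|y - A(x+te_j)\|_2^2 = \|r\|_2^2 - 2\Re(\bar t\, c_j) + |t|^2 = \bigl|t - c_j\bigr|^2 + \bigl(\|r\|_2^2 - |c_j|^2\bigr),
\]
so the smooth part of $G_\lambda(x+te_j)$ is a parabola in $t$ with minimizer $t = c_j$ (value $\|r\|_2^2 - |c_j|^2$) and value $\|r\|_2^2$ at $t = 0$.

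The decisive ingredient is the optimality condition on $x$: because $x$ solves $\min\{\|y-Az\|_2 : \supp(z)\subseteq S\}$, the normal equations force $(A^*(y-Ax))_j = 0$ for every $j \in S$, i.e. $c_j = 0$ whenever $j \in S$. This kills the linear term in $t$ in precisely the two cases $j \in S$ of \eqref{eq:defDelta}, leaving only $|t|^2$ in the smooth part.

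It then remains to add the penalty $\lambda\|x+te_j\|_{0,w}$, which is piecewise constant in $t$ with a single jump of size $\lambda w_j^2$, and to minimize case by case. If $j \notin S$, then $x_j = 0$, the penalty equals $\lambda\|x\|_{0,w}$ at $t=0$ and $\lambda\|x\|_{0,w} + \lambda w_j^2$ for $t \neq 0$; comparing the value $G_\lambda(x)$ at $t=0$ with the smooth minimum $G_\lambda(x) - |c_j|^2 + \lambda w_j^2$ attained at $t = c_j$ yields a reduction of $\max\{|c_j|^2 - \lambda w_j^2,\,0\}$. If $j \in S$ and $x_j \neq 0$, then $c_j = 0$, the penalty drops by $\lambda w_j^2$ only at the single point $t = -x_j$, and comparing $G_\lambda(x)$ at $t=0$ with $G_\lambda(x) + |x_j|^2 - \lambda w_j^2$ at $t = -x_j$ gives a reduction of $\max\{\lambda w_j^2 - |x_j|^2,\,0\}$. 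If $j \in S$ and $x_j = 0$, then $c_j = 0$ and any $t \neq 0$ strictly increases both $|t|^2$ and the penalty, so $t=0$ is optimal and the reduction is $0$. In every case the reduction equals $\Delta_\lambda(x,S,j)$, as claimed.

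The main obstacle is that $G_\lambda(x+te_j)$ is not continuous in $t$ — the weighted $\ell^0$ term jumps when the $j$-th coordinate crosses zero — so the minimizer cannot be found by differentiation. The argument instead restricts the smooth parabola to the open region where the support is unchanged and separately evaluates the one point where the support changes, then takes the smaller of the two candidate values; the $\max\{\cdot,\,0\}$ in \eqref{eq:defDelta} is exactly the record of which candidate wins. Care is also needed to check that the minimizing $t$ lies in the correct region (e.g. $t = c_j \neq 0$ in the first case, and $0 \neq -x_j$ in the second, which holds since $x_j \neq 0$), so that each comparison is legitimate.
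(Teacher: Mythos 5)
Your proof is correct and follows essentially the same route as the paper's: the same three-way case split, the same use of the normal equations to force $(A^*(y-Ax))_j = 0$ for $j \in S$, and the same minimization of a parabola plus a piecewise-constant jump penalty. The only differences are cosmetic — you complete the square explicitly where the paper invokes \cite[Lemma 3.3]{foucart2013mathematical}, and the paper merges the two $j \in S$ subcases via a Kronecker-delta expression before splitting at the end.
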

\begin{proof}
Throughout the proof, we will denote the residual as $r:=y-Ax$. 

Let us first assume $j \notin S$. In this case, we compute
\begin{align*}
G_\lambda(x+te_j)
& = \|y-A(x+te_j)\|_2^2 + \lambda \|x+te_j\|_{0,w} \\
& = \|r\|_2^2 + \underbrace{|t|^2 - 2 \Re(\bar{t}(A^*r)_j) + \lambda (1-\delta_{t,0}) w_j^2}_{=: h(t)}+ \lambda \|x\|_{0,w},
\end{align*}
where $\delta_{x,y}$ is the Kronecker delta function. In particular, we have
$$
h(t)
= 
\begin{cases}
0 & \text{if } t = 0\\
|t|^2 - 2 \Re(\bar{t}(A^*r)_j) +  \lambda w_j^2  &\text{if } t \in \mathbb{C} \setminus \{0\}.
\end{cases}
$$
Now, if $(A^*r)_j = 0$, then $h(t)$ is minimized for $t = 0$ and $\min_{t \in \mathbb{C}}G(x+te_j) = G(x)$. On the other hand, if $(A^*r)_j \neq  0$, by arguing similarly to \cite[Lemma 3.3]{foucart2013mathematical}, we see that 
$$
\min_{t \in \mathbb{C} \setminus\{0\}} h(t) = -|(A^*r)_j|^2 + \lambda w_j^2,
$$
where the minimum is realized for some $t \in \mathbb{C}$ with $|t| = |(A^*r)_j| \neq 0$. In summary,
\begin{align*}
\min_{t \in \mathbb{C}} h(t)
= \min\left\{-|(A^*r)_j|^2 + \lambda w_j^2, 0 \right\}
= -\max\left\{|(A^*r)_j|^2 - \lambda w_j^2, 0\right\},
\end{align*}
which concludes the case $j \notin S$.

Now, assume $j \in S$. Since the vector $x_S  = x|_{S}\in \mathbb{C}^{|S|}$ is a least-squares solution to $A_S z = y$, it  satisfies  $A_S^*(y-A_S x_S) = 0$ and, in particular, $(A^*r)_j = 0$. (Here, $A_S \in \mathbb{C}^{m \times|S|}$ denotes the submatrix of $A$ corresponding to the columns in $S$). Therefore, arguing similarly as before, we have
$$
G(x+te_j) 
= \|r\|_2^2 + \underbrace{|t|^2  + \lambda (1-\delta_{t,-x_j})w_j^2}_{=:\ell(t)} + \lambda \|x-x_je_j\|_{0,w}.
$$
Considering only the terms depending on $t$, it is not difficult to see that
$$
\min_{t \in \mathbb{C}} \ell(t) 
= \min\{|x_j|^2,\lambda w_j^2\}.
$$
As a consequence, for every $j \in S$, we obtain
\begin{align*}
\min_{t \in \mathbb{C}} G(x+te_j) 
& = \|r\|_2^2 + \lambda \|x-x_je_j\|_{0,w} + \min\{|x_j|^2,\lambda w_j^2\}\\
& = G(x) + \min\{|x_j|^2,\lambda w_j^2\} - \lambda(1-\delta_{x_j,0})w_j^2.
\end{align*}
The results above combined with simple algebraic manipulations lead to the desired result. 
\end{proof}

\section{Numerical results}
\label{sec:numerics}


In this section, we show the effectiveness of WOMP (Algorithm~\ref{alg:WOMP}) in the sparse multivariate function approximation setting described in \S\ref{sec:fun_approx}. In particular, we choose the weights $w$ as in \eqref{eq:weights}. We consider the function
\begin{equation}
\label{eq:def_f}
f(t) = \ln\left(d+1+\sum_{k = 1}^d t_k\right), 
\quad \text{with } d = 10. 
\end{equation}
We let $\{\phi_j\}_{j \in \mathbb{N}_0^d}$ be the Legendre and Chebyshev bases and $\nu$ be the respective orthogonality measure. In Fig.~\ref{fig:it_vs_err_legendre} and \ref{fig:it_vs_err_chebyshev} we show the relative $L^2_\nu(D)$-error of the approximate solution $\hat{x}_K$ computed via WOMP as a function of  iteration $K$, for different values of the regularization parameter $\lambda$ in order to solve the linear system $A z = y$, where $A$ and $y$ are defined by \eqref{eq:def_A_y} and where the $\ell^2$-normalization of the columns of $A$ is taken into account according to Remark~\ref{rem:normaliz}. We consider $\lambda = 0$ (corresponding to  OMP) and $\lambda = 10^{-k}$, with $k = 3,3.5,4,4.5,5$. Here, $\Lambda$ is the hyperbolic cross of order $s = 10$, corresponding to $N =|\Lambda| = 571$. Moreover, we consider $m = 60$ and $m = 80$. The results are averaged over $25$ runs and the $L^2_\nu(D)$-error is computed with respect to a reference solution approximated via least squares and using $20 N = 11420$ random i.i.d.\ samples according to $\nu$. We compare the WOMP accuracy with the accuracy obtained via the QCBP program \eqref{eq:WQCBP} with $\eta = 0$ and WQCBP with tolerance parameter $\eta = 10^{-8}$. To solve these two programs we use CVX Version 1.2, a package for specifying and solving convex programs \cite{gb08,cvx}. In CVX, we use the  solver \texttt{'mosek'} and we set CVX precision to  \texttt{'high'}.
\begin{figure}[t]
\centering
\includegraphics[width = 5cm]{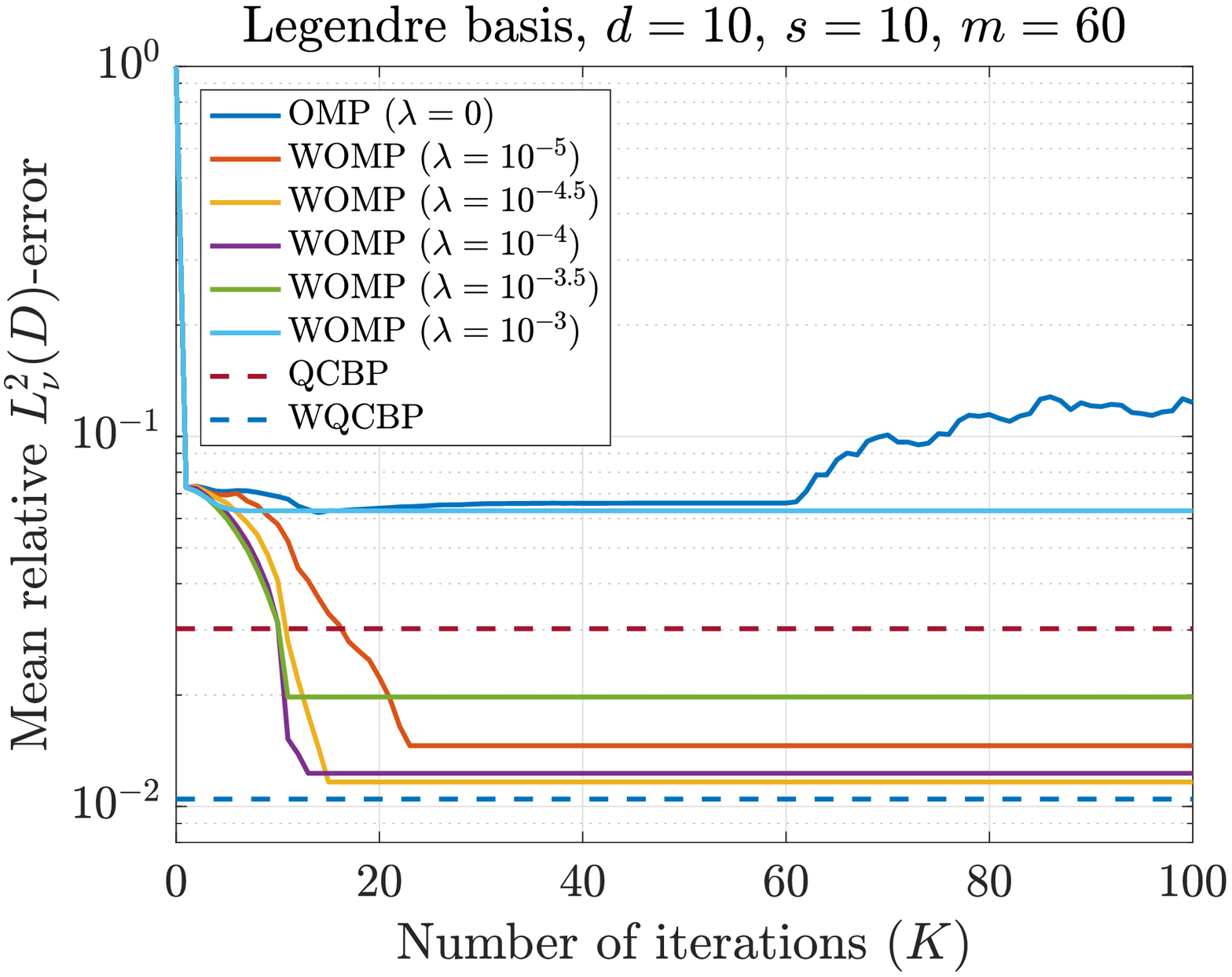}
\includegraphics[width = 5cm]{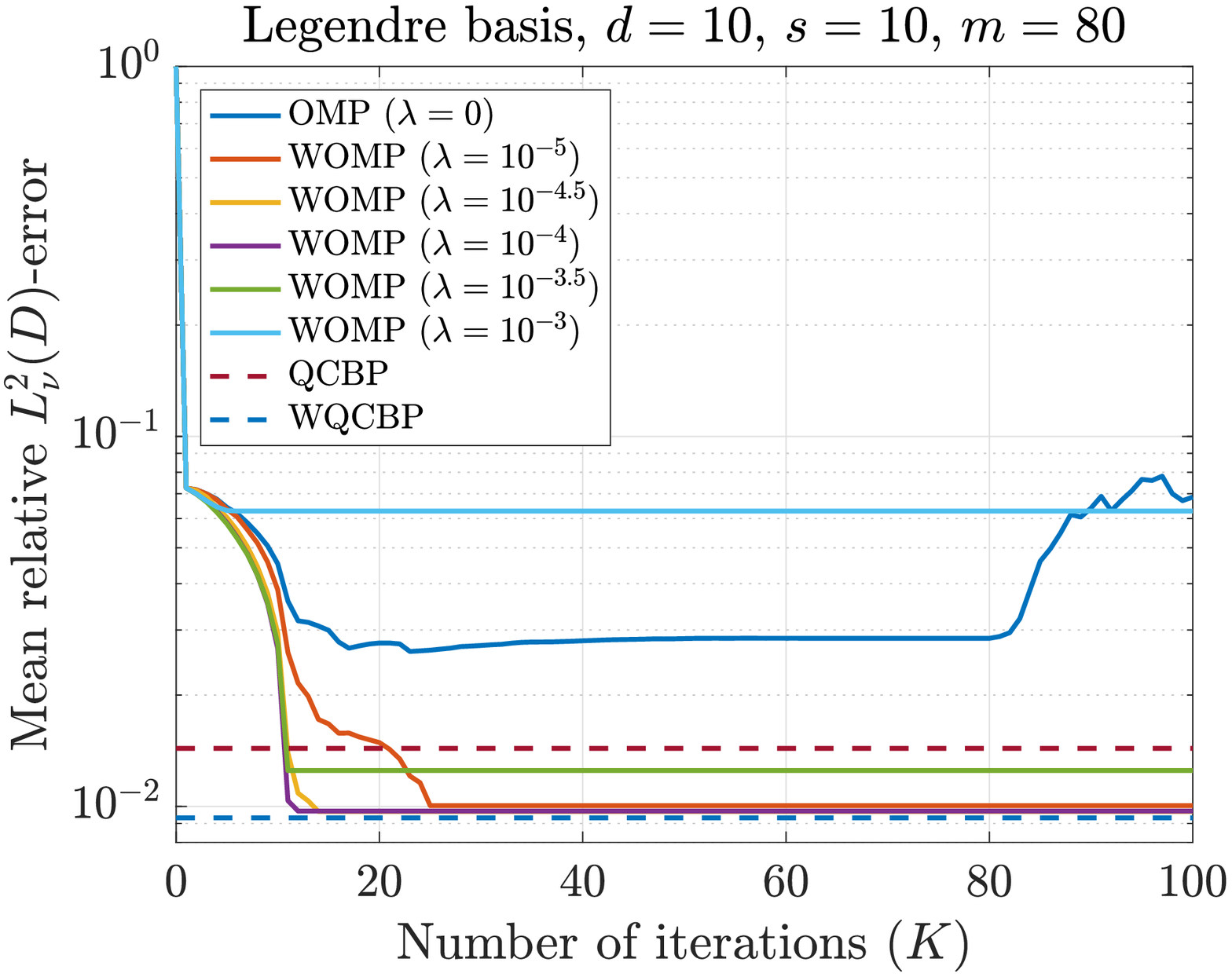}
\caption{\label{fig:it_vs_err_legendre} Plot of the mean relative $L^2_\nu(D)$-error as a function of the number of iterations $K$ of WOMP (Algorithm~\ref{alg:WOMP}) for different values of the regularization parameter $\lambda$ for the approximation of the function $f$ defined in \eqref{eq:def_f} and using Legendre polynomials. The accuracy of WOMP is compared with those of QCBP and WQCBP.}
\end{figure}
\begin{figure}[t]
\centering
\includegraphics[width = 5cm]{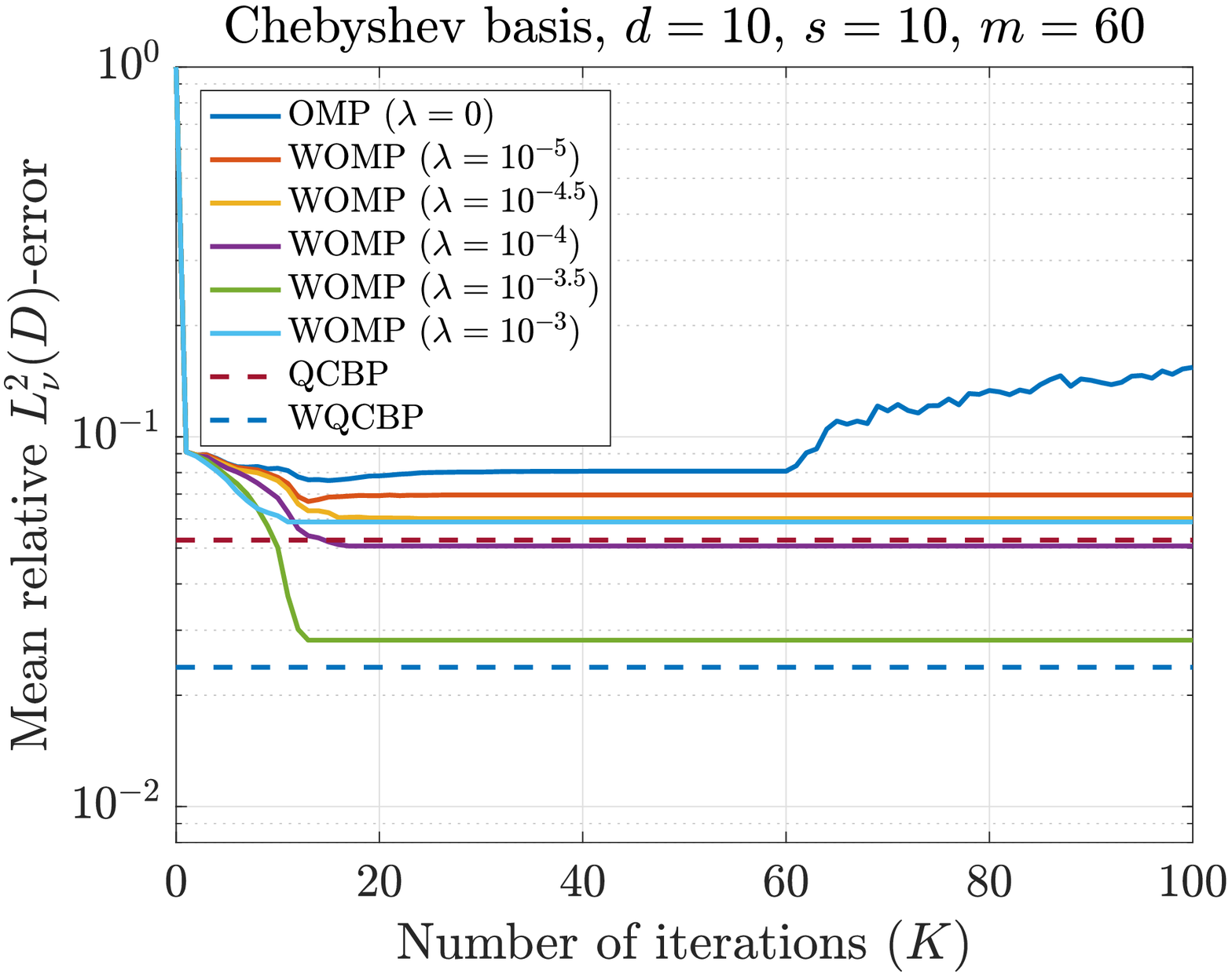}
\includegraphics[width = 5cm]{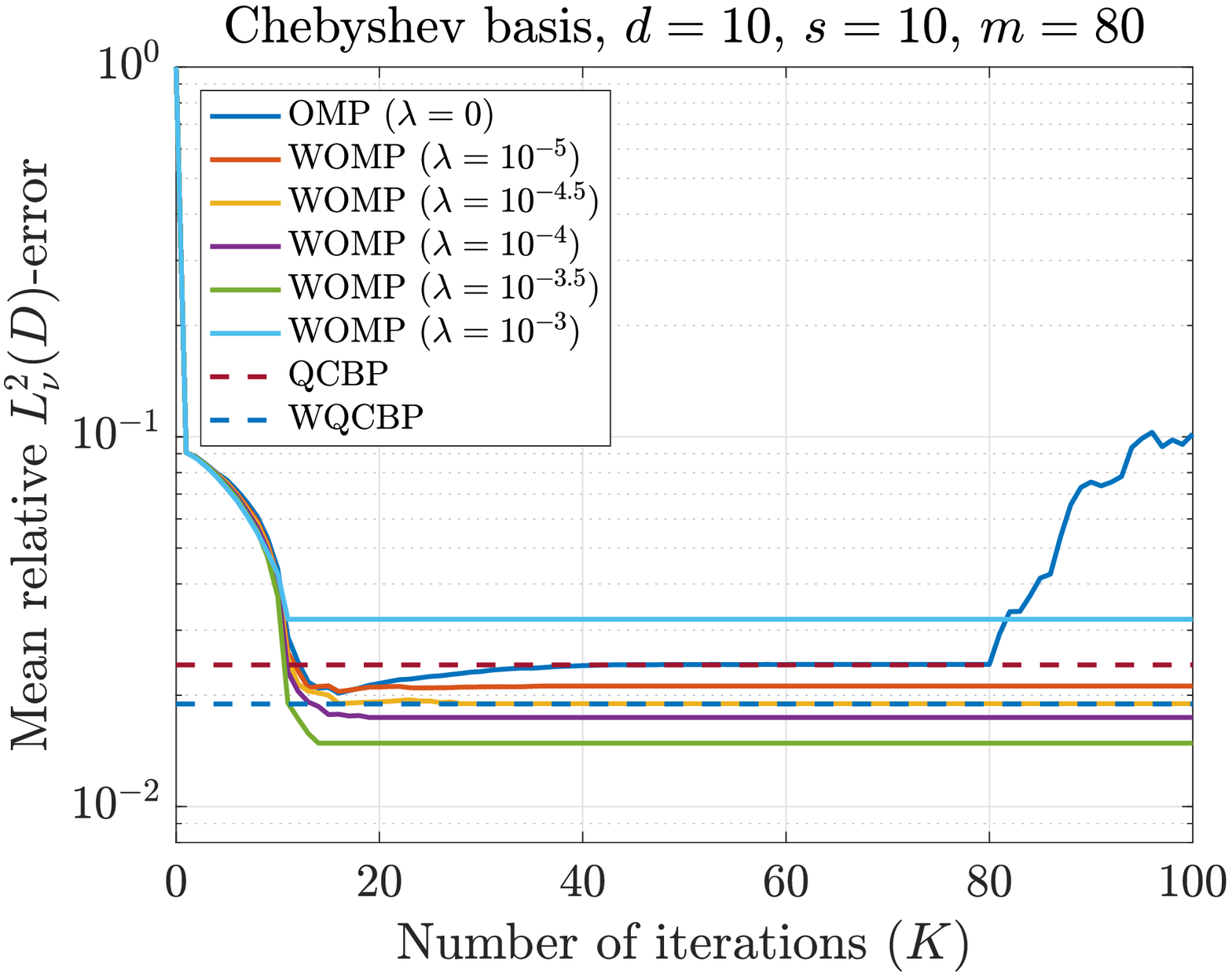}
\caption{\label{fig:it_vs_err_chebyshev} The same experiment as in Fig.~\ref{fig:it_vs_err_legendre}, with Chebyshev polynomials.}
\end{figure}

Fig.~\ref{fig:it_vs_err_legendre} and \ref{fig:it_vs_err_chebyshev} show the benefits of using weights as compared to the unweighted OMP approach, when the parameter $\lambda$ is tuned appropriately. A good choice of $\lambda$ for the setting considered here seem to be between $10^{-4.5}$ and $10^{-3.5}$. We also observe that WOMP is able to reach similar level of accuracy as WQCBP. An interesting feature of WOMP with respect to OMP is its better stability. We observe than after  the $m$-th iteration, the OMP accuracy starts getting substantially worse. This can be explained by the fact that when $K$ approaches $N$, OMP tends to destroy sparsity by fitting the data too much. This phenomenon is not observed in WOMP, thanks to its ability to keep the support of $\hat{x}_k$ small via the explicit enforcement of the wighted sparsity prior (see Fig.~\ref{fig:it_vs_size}).

\begin{figure}[t]
\centering
\includegraphics[width = 5cm]{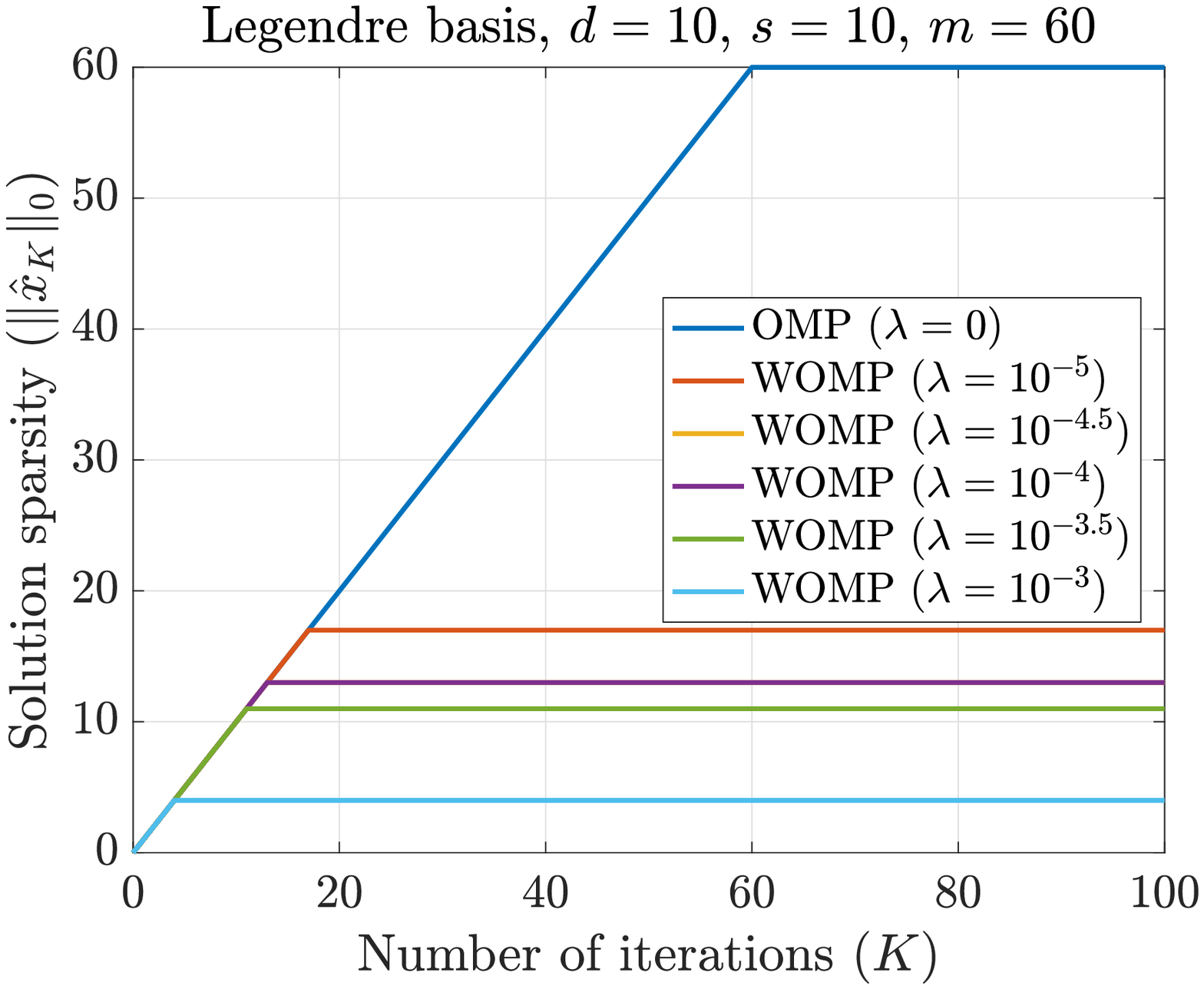}
\includegraphics[width = 5cm]{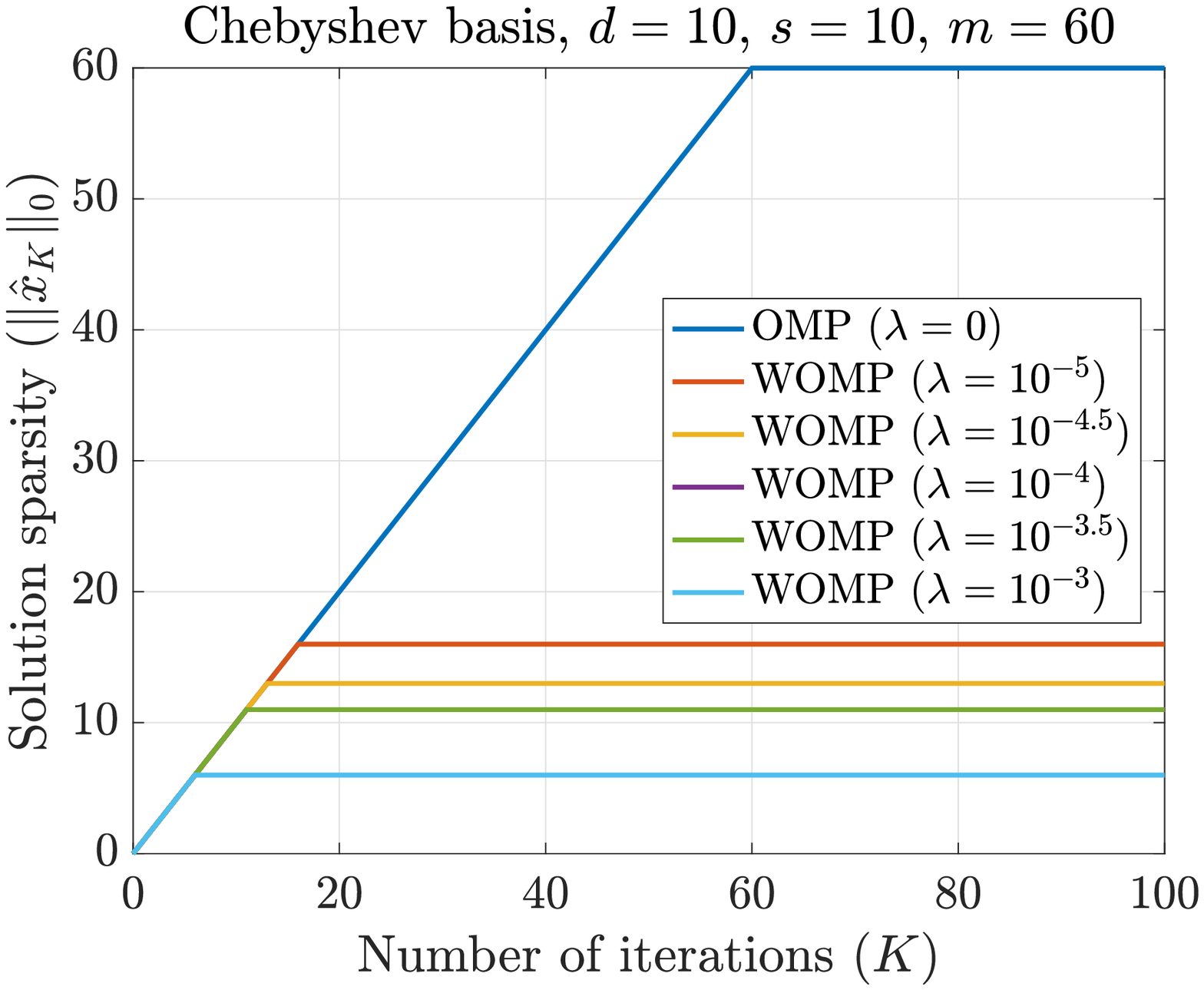}
\caption{\label{fig:it_vs_size}Plot of the support size of $\hat{x}_K$ as a function of the number of iterations $K$ for WOMP in the same setting as in Fig.~\ref{fig:it_vs_err_legendre} and \ref{fig:it_vs_err_chebyshev}, with Legendre (left) and Chebyshev (right) polynomials. The larger the regularization parameter $\lambda$, the sparser solution. (In the left plot, the curves relative to $\lambda = 10^{-4.5}$ and $\lambda = 10^{-4}$ overlap. In the right plot, the same happens for $\lambda = 10^{-4}$ and $\lambda = 10^{-3.5}$.)}
\end{figure}

We show the better computational efficiency of WOMP with respect to the convex minimization programs QCBP and WQCBP solved via CVX by tracking the runtimes for the different approaches. In Table~\ref{tab:runtimes_1} we show the running times for the different recovery strategies. 
\begin{table}[t]
\centering
\begin{tabular}{c|c|c|c|c|ccccc}
\multirow{2}{*}{Basis} & \multirow{2}{*}{$m$} & \multirow{2}{*}{QCBP} & \multirow{2}{*}{WQCBP} & \multirow{2}{*}{OMP} & \multicolumn{5}{c}{WOMP with $\lambda$ as below} \\
 & & &  & & $10^{-5}$ & $10^{-4.5}$ & $10^{-4}$ & $10^{-3.5}$ & $10^{-3}$ \\
\hline
Legendre & 60 & 1.9e-01 & 2.0e-01 & 1.6e-02 & 1.3e-02 & 1.2e-02 & 1.3e-02 & 1.2e-02 & 1.2e-02 \\ 
Legendre & 80 & 2.1e-01 & 2.1e-01 & 1.7e-02 & 1.5e-02 & 1.3e-02 & 1.4e-02 & 1.4e-02 & 1.3e-02 \\ 
Chebyshev & 60 & 1.9e-01 & 1.9e-01 & 1.5e-02 & 1.3e-02 & 1.2e-02 & 1.2e-02 & 1.2e-02 & 1.2e-02 \\ 
Chebyshev & 80 & 2.1e-01 & 2.1e-01 & 1.7e-02 & 1.5e-02 & 1.3e-02 & 1.4e-02 & 1.4e-02 & 1.4e-02 \\  
\end{tabular}
\caption{\label{tab:runtimes_1}Comparison of the computing times for WQCBP and $K = 25$ iterations of WOMP.}
\end{table}
The running times for WOMP are referred to $K=25$ iterations, sufficient to reach the best accuracy for every value of $\lambda$ as shown in Fig.~\ref{fig:it_vs_err_legendre} and \ref{fig:it_vs_err_chebyshev}. Moreover, the computational times for WOMP take into account the $\ell^2$-normalization of the columns of $A$ (see Remark~\ref{rem:normaliz}). WOMP consistently outperforms convex minimization, being more than ten times faster in all cases. We note that in this comparison a key role is played by the parameter $K$ or, equivalently, by the sparsity of the solution. Indeed, in this case, considering a larger value of $K$ would result is a slower performance of WOMP, but it would not improve the accuracy of the WOMP solution (see Fig.~\ref{fig:it_vs_err_legendre} and \ref{fig:it_vs_err_chebyshev}).

\section{Conclusions}
\label{sec:conclusions}

We have considered a greedy recovery strategy for high-dimensional function approximation from a small set of pointwise samples. In particular, we have proposed a generalization of the OMP algorithm to the setting of weighted sparsity (Algorithm~\ref{alg:WOMP}). The corresponding greedy selection strategy is derived in Proposition~\ref{prop:minG}. 

Numerical experiments show that WOMP is an effective strategy for high-dimensional approximation, able to reach the same accuracy level of WQCBP while being considerably faster when the target sparsity level is small enough.  A key role is played by the regularization parameter $\lambda$, which may be difficult to tune due to its sensitivity to the parameters of the problem ($m$, $s$, and $d$), and on the polynomial basis employed. In other applications, where explicit formulas for the weights as \eqref{eq:weights} are not available, there might also be a nontrivial interplay between $\lambda$ and $w$. In summary, despite the promising nature of the numerical experiments illustrated in this paper, a more extensive numerical investigation is needed in order to study the sensitivity of WOMP with respect to $\lambda$. Moreover, a theoretical analysis of the WOMP approach might highlight practical recipe for the choice of this parameter, similarly to \cite{adcock2017correcting}. This type of analysis may also help identifying the sparsity regime where WOMP outperforms weighted $\ell^1$ minimization, which, in turn, could be formulated in terms of suitable assumptions on the regularity of $f$. These questions are beyond the scope of this paper and will be object of future work.


\section*{Acknowledgements}

The authors acknowledge the support of the Natural Sciences and Engineering Research Council of Canada through grant number 611675, and of the Pacific Institute for the Mathematical Sciences (PIMS) Collaborative Research Group  ``High-Dimensional Data Analysis''. S.B.\ also acknowledges the support of the PIMS Postdoctoral Training Centre in Stochastics. 

\bibliographystyle{plain}
\bibliography{biblio}

\end{document}